\def\etal{\mbox{et al.}}
\newcommand{\R}{\mathbb{R}}
\newcommand{\T}{\mathbb{T}}
\newcommand{\rar}{\mbox{$\rightarrow$}}
\newcommand{\Z}{\mathbb{Z}}
\newtheorem{theorem}{Theorem}[section]
\newtheorem{lemma}[theorem]{Lemma}
\newtheorem{proposition}[theorem]{Proposition}
\newtheorem{definition}[theorem]{Definition}
\newtheorem{example}[theorem]{Example}
\newtheorem{remark}{Remark}
\begin{document}

\title{Backward Linear Control Systems on Time Scales\thanks{Submitted November 11, 2009; 
Revised March 28, 2010; Accepted April 03, 2010; for publication 
in the \emph{International Journal of Control}.}}

\author{Ewa Paw{\l}uszewicz$^{a,b}$\\
\texttt{ewa@ua.pt, epaw@pb.edu.pl}
\and
Delfim F. M. Torres$^{a}$\\
\texttt{delfim@ua.pt}}

\date{$^{a}$Department of Mathematics, 
University of Aveiro\\ 
3810-193 Aveiro, Portugal\\[0.3cm]
$^{b}$Faculty of Computer Science,
Bia{\l}ystok University of Technology\\ 
15-351 Bia{\l}ystok, Poland}

\maketitle


\begin{abstract}
We show how a linear control systems theory
for the backward nabla differential operator
on an arbitrary time scale
can be obtained via Caputo's duality.
More precisely, we consider
linear control systems with outputs
defined with respect to the backward jump operator.
Kalman criteria of controllability
and observability, as well as realizability conditions,
are proved.
\end{abstract}

\bigskip

\noindent \textbf{Keywords:}
time scales; duality; linear time varying control systems;
controllability; observability; realizability.

\smallskip

\noindent \textbf{Mathematics Subject Classification 2010:}
34N05, 93B05, 93B07, 93B15.

\smallskip


\section{Introduction}

The theory of linear control systems of both continuous- and discrete-time cases
is a subject well developed -- see, \textrm{e.g.},
\citep{Kalman,LMS_wolovich,O,Zabczyk} and references therein.
It can be noticed that many results obtained in both discrete and continuous
cases are similar or even identical. Recently, many problems in control
theory have been generalized to \emph{time scales} \citep{D2,D1,BPaw,BKP,BPW,Bill_2009,DGJM,Ewa:Delfim:JOTA}.
The mathematics of time scales was born in 1988 \citep{MR1062633},
providing a rich calculus that unifies and extends the theories
of difference and differential equations \citep{Bh}. A time scale is a model of time.
Besides the standard cases of the whole real line (continuous-time case)
and all integers (discrete-time case) there are many other models of time included,
\textrm{e.g.}, the time scale $\mathbb{P}_{a,b} = \bigcup_{k=0}^{\infty} [k(a+b),k(a+b)+a]$,
$q$-scales, quantum time scales (objects with nonuniform domains),
and many others -- see \citep{Bh,Bh_adv}. However, the discrete-time systems
on time scales are based on the difference operator and not on the more conventional shift operator.
Note that the difference operator description provides a smooth transition from sampled-data
algorithms to their continuous-time counterparts \citep{GGS01}. In order to deal with
non-traditional applications in areas such as medicine, economics, or engineering,
where the system dynamics are described on a time scale partly continuous and partly discrete,
or to accommodate non-uniform sampled systems, one needs to work with systems
defined on a time scale -- see, \textrm{e.g.}, \citep{MR2218315,MR2433734}.

The study of control systems defined on an arbitrary time scale
is a six years old emerging research area under strong current research
\citep{D2,D1,BPaw,BKP,BPW,Bill_2009,MR2484109,MR2436490,MR2394394,DGJM,KBPW:09,Ewa:Delfim:JOTA},
motivated by multidisciplinary applications that require simultaneous modeling
of discrete and continuous data \citep{Seiffertt}.
In \citep{BPaw} the question of realizability
of linear time-invariant control systems defined on time scales
is studied. Main result shows how to construct a state space representation
of an abstract input/output map and gives conditions
for this map to allow such a representation. It is also proved
that classical Kalman conditions \citep{Kalman} are still valid for systems on time scales.
The assumption of regressivity for the considered control systems is dropped.
This assumption implies existence and uniqueness of both forward and backward solutions
of linear delta differential equations \citep{Bh}.
In problems that are studied in \citep{BPaw} only forward solutions
are needed and they exist without the regressivity hypothesis.
In \citep{DGJM} it is developed, under the regressivity assumption,
the foundational notions of controllability, observability,
and realizability of time-varying linear control systems
defined on an arbitrary time scale. The proposed generalized framework
has already shown promising applications \citep{DGJM}. A delta-NARX model has been
suggested for modeling nonlinear control systems,
and it has been applied to the identification of a van der Pol oscillator \citep{AK}.

The theory of time scales is, however, not unique,
and two approaches are followed in the literature:
one dealing with the delta calculus (the forward approach);
the other dealing with the nabla calculus (the backward approach) \citep{MR1962545}.
Available results on linear control systems on time scales are essentially
restricted to the forward approach, but recent applications in economics
have suggested that the backwards framework is sometimes more natural
and preferable \citep{MR2218315,MR2433734,MR2436490,MR2528200,Nat:Del}.
This becomes evident when one considers that the
time scales analysis can also have important
implications for numerical analysts, who often prefer
backward differences rather than forward differences to
handle their computations due to practical implementation reasons
and also for better stability properties of implicit discretizations.

The goal of this paper is to develop the foundations
of a backward linear control systems theory
on an arbitrary time scale. For that we make use of the recent duality theory
\citep{Caputo}, which presents tools for obtaining nabla results
from the delta calculus and viceversa,
without making any assumptions on the regularity of the time scales
(thus diverging from the approach in \citep{GGS05}).
The organization of the paper is as follows.
Section~\ref{sec:dual} presents the main definitions and concepts of duality
on time scales. In Section~\ref{sec:lcs} we prove existence and uniqueness of a backward solution
for time-varying linear control systems. In sections \ref{sec:cont} and \ref{sec:obs}
we show that controllability and observability rank conditions are still valid
for time-invariant and time-varying linear control systems defined on backward (dual) time scales.
Finally, in Section~\ref{sec:real} we prove conditions of existence
of minimal realizations for the considered backward systems.
We end with Section~\ref{sec:conc} of conclusions.


\section{Duality}
\label{sec:dual}

We assume the reader to be familiar with the
calculus on time scales \citep{MR1062633,Bh}.
Let $\T$ be an arbitrary time scale and let $\T^\star:=\{s\in\R:-s\in\T\}$.
Note that $\T^\star$ is a nonempty closed subset of the real line (so it is
also a time scale), and that the map $\xi:\T\rar\T^*$
defined by $\xi(t) = - t$ is onto and one-to-one.
The new time scale $\T^\star$ is called the
\emph{dual time scale} (to $\T$). It follows that
$(\T^\kappa)^\star=(\T^\star)_\kappa$ and $(\T_\kappa)^\star=(\T^\star)^\kappa$.
By $[a,b]_\T$ we denote the intersection of the real
interval $[a,b]$ with the time scale $\T$, \textrm{i.e.},
$[a,b]_\T := [a,b] \cap \T$. Similarly for $\T^\star$.

The \emph{dual function} to $f:\T\rar\R$, defined on $\T^\star$,
is the function $f^\star:\T^\star\rar\R$ given by $f^\star(s):=f(-s)$ for all $s\in\T^\star$.
It can be shown that $f$ is rd-continuous (resp. ld-continuous) if and only if
its dual function $f^\star$ is ld-continuous (resp. rd-continuous)
\citep{Caputo}.

Given $\sigma,\rho:\T\rar\T$, the jump operators of the time scale $\T$,
then the jump operators for time scale $\T^\star$,
$\hat{\sigma}, \hat{\rho}:\T^\star\rar\T^\star$,
are given by the following relations \citep{Caputo}:
\begin{equation}
\label{hat_rho}
\begin{split}
\hat{\sigma}(s) &= -\rho(-s)=-\rho^\star(s)\, , \\
\hat{\rho}(s) &= -\sigma(-s)=-\sigma^\star(s) \, ,
\end{split}
\end{equation}
for all $s\in\T^\star$. These two equalities implies that the forward graininess
$\mu:\T\rar [0,\infty)$ and the backward graininess
$\hat{\nu} :\T^\star \rar [0,\infty)$ are related by
\begin{equation*}
 \hat{\nu}(s)=\mu^\star(s)\;\;\;\;\;\text{for\;all\;$s\in\T^\star.$}
\end{equation*}
Similarly, the backward graininess $\nu:\T\rar [0,\infty)$
and the forward graininess
$\hat{\mu} :\T^\star\rar [0,\infty)$ are related by
\begin{equation}
\label{hat_mu}
 \hat{\mu}(s)=\nu^\star(s)\;\;\;\;\;\text{for\;all\;$s\in\T^\star.$}
\end{equation}

\begin{lemma}\citep{Caputo}
\label{delta_star}
Let function $f:\T\rar\R$ be delta (resp. nabla) differentiable at point
$t_0\in\T^\kappa$ (resp. at $t_0\in\T_\kappa$). Then function $f^\star:\T^\star\rar\R$
is nabla (resp. delta) differentiable at $-t_0\in(\T^\star)_\kappa$
(resp. at $-t_0\in(\T^\star)^\kappa$) and the following relations
hold true:
 \[f^\Delta(t_0)=-(f^\star)^{\hat{\nabla}}(-t_0)\;\;\;
 (\text{resp.}\;\;\;f^{\nabla}(t_0)=-(f^\star)^{\hat{\Delta}}(-t_0)),\]
 or
 \[f^\Delta(t_0)=-((f^\star)^{\hat{\nabla}})^\star(t_0)\;\;\;
 (\text{resp.}\;\;\;f^{\nabla}(t_0)=-((f^\star)^{\hat{\Delta}})^\star(t_0)),\]
 or
 \[(f^\Delta)^\star(-t_0)=-((f^\star)^{\hat{\nabla}})(-t_0)\;\;\;
 (\text{resp.}\;\;\;(f^{\nabla})^\star(-t_0)=-(f^\star)^{\hat{\Delta}}(-t_0)).\]
\end{lemma}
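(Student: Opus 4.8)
The plan is to argue directly from the limit definition of the delta and nabla derivatives, transporting everything through the homeomorphism $\xi(t)=-t$. I treat the delta $\rar$ nabla statement; the nabla $\rar$ delta case is entirely symmetric, obtained by interchanging the roles of $\sigma$ and $\rho$ (and of $\Delta$ and $\nabla$) via \eqref{hat_rho}. First I would record that the point $-t_0$ lies in the correct set for $\hat{\nabla}$-differentiation: since $t_0\in\T^\kappa$ and $(\T^\kappa)^\star=(\T^\star)_\kappa$, we have $-t_0=\xi(t_0)\in(\T^\star)_\kappa$, so the expression $(f^\star)^{\hat{\nabla}}(-t_0)$ is meaningful.

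The heart of the argument is a computation of the nabla difference quotient of $f^\star$ at $-t_0$. Using the definition $f^\star(s)=f(-s)$ together with $\hat{\rho}(s)=-\sigma(-s)$ from \eqref{hat_rho}, I would first evaluate $\hat{\rho}(-t_0)=-\sigma(t_0)$ and hence $f^\star(\hat{\rho}(-t_0))=f(\sigma(t_0))$. Writing an arbitrary point near $-t_0$ as $-s$ with $s$ near $t_0$, the numerator becomes $f^\star(\hat{\rho}(-t_0))-f^\star(-s)=f(\sigma(t_0))-f(s)$, while the denominator is $\hat{\rho}(-t_0)-(-s)=s-\sigma(t_0)=-(\sigma(t_0)-s)$. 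Consequently the nabla quotient of $f^\star$ equals the negative of the delta quotient of $f$, namely
\begin{equation*}
\frac{f^\star(\hat{\rho}(-t_0))-f^\star(-s)}{\hat{\rho}(-t_0)-(-s)}
=-\,\frac{f(\sigma(t_0))-f(s)}{\sigma(t_0)-s}\, .
\end{equation*}
Because $\xi$ is a homeomorphism, neighbourhoods of $t_0$ in $\T$ correspond bijectively to neighbourhoods of $-t_0$ in $\T^\star$; I would phrase this with the $\varepsilon$-$\delta$ formulation of the derivative so that the assumed existence of $f^\Delta(t_0)$ transfers verbatim into the existence of $(f^\star)^{\hat{\nabla}}(-t_0)$, yielding $(f^\star)^{\hat{\nabla}}(-t_0)=-f^\Delta(t_0)$, that is, the first displayed identity $f^\Delta(t_0)=-(f^\star)^{\hat{\nabla}}(-t_0)$.

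Finally, the remaining two displayed forms require no further analysis: they are notational restatements of the first via the definition of the dual function. Indeed, for any function $h$ on $\T^\star$ one has $h^\star(t_0)=h(-t_0)$, so $((f^\star)^{\hat{\nabla}})^\star(t_0)=(f^\star)^{\hat{\nabla}}(-t_0)$; and likewise $(f^\Delta)^\star(-t_0)=f^\Delta(t_0)$. Substituting these identities into the first relation produces the second and third lines of the statement. I expect the only genuine subtlety to be the sign reversal coming from the negated denominator, together with the clean transfer of the $\varepsilon$-$\delta$ neighbourhoods through $\xi$; once the difference quotients are aligned, the rest is bookkeeping, and the nabla $\rar$ delta direction follows by the same computation with $\hat{\sigma}(s)=-\rho(-s)$ and $(\T_\kappa)^\star=(\T^\star)^\kappa$ in place of their counterparts.
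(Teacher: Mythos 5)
Your proof is correct: the sign flip from the negated denominator $\hat{\rho}(-t_0)-(-s)=-(\sigma(t_0)-s)$, the transfer of the $\varepsilon$-neighbourhood characterization of the derivative through the isometry $\xi(t)=-t$, the placement $-t_0\in(\T^\star)_\kappa$ via $(\T^\kappa)^\star=(\T^\star)_\kappa$, and the reduction of the second and third displays to the first by $h^\star(t_0)=h(-t_0)$ are all sound. Note that the paper itself states Lemma~\ref{delta_star} without proof, citing \citep{Caputo}; your argument is essentially the duality computation given in that reference, so there is nothing to add.
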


Additionally, from properties of the $\Delta$ derivative on the time scale $\T$
and from (\ref{hat_mu}), it follows that for any nabla differentiable function
$f:\T\rar\R$ its dual function $f^\star:\T^\star\rar\R$ is delta differentiable with
\[(f^\star)^{\hat{\sigma}}(s)=f^\star(s)+\hat{\mu}(s) \cdot
(f^\star)^{\hat{\Delta}}(s) \, \quad \text{for all } s \in (\T^\star)^\kappa \, .\]
Thus, $f^\rho(t)=f(t)-\nu(t) \cdot f^{\nabla}(t)$  for all  $t \in \T_\kappa$.

\begin{proposition}\citep{Caputo}
\label{int_star}
 \begin{enumerate}
   \item [(i)] Let $f:[a,b]_{\T}\rar\R$ be a rd-continuous function. Then,
   \[\int_a^bf(t)\Delta t=\int_{-b}^{-a}f^\star(s)\hat{\nabla} s \, .\]
   \item [(ii)] Let $f:[a,b]_{\T}\rar\R$ be a ld-continuous function. Then,
   \[\int_a^b f(t)\nabla t = \int_{-b}^{-a}f^\star(s)\hat{\Delta} s \, .\]
 \end{enumerate}
\end{proposition}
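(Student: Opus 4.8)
The plan is to reduce both identities to the fundamental theorem of calculus on time scales, using the derivative duality of Lemma~\ref{delta_star} to transport antiderivatives between $\T$ and $\T^\star$. I would treat part (i) in detail, since part (ii) follows by the identical argument with the roles of $\Delta$ and $\nabla$ interchanged.

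First I would invoke the existence of antiderivatives. Since $f$ is rd-continuous on $[a,b]_{\T}$, it admits a delta antiderivative $F$, so that $F^\Delta=f$ and $\int_a^b f(t)\Delta t=F(b)-F(a)$. Because $f$ is rd-continuous, its dual $f^\star$ is ld-continuous on $\T^\star$, which is precisely the regularity needed to make the right-hand side a well-defined $\hat{\nabla}$-integral and to guarantee a nabla antiderivative on the dual side.

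Next I would produce that dual antiderivative explicitly from $F$. Applying Lemma~\ref{delta_star} to $F$ and setting $t_0=-s$, the relation $F^\Delta(t_0)=-(F^\star)^{\hat{\nabla}}(-t_0)$ becomes $(F^\star)^{\hat{\nabla}}(s)=-F^\Delta(-s)=-f^\star(s)$, where I use $F^\Delta(-s)=f(-s)=f^\star(s)$. Hence $-F^\star$ is a nabla antiderivative of $f^\star$ on $(\T^\star)_\kappa=(\T^\kappa)^\star$, and the fundamental theorem for the nabla integral gives $\int_{-b}^{-a}f^\star(s)\hat{\nabla} s=(-F^\star)(-a)-(-F^\star)(-b)=F^\star(-b)-F^\star(-a)$. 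Unwinding $F^\star(s)=F(-s)$ yields $F(b)-F(a)=\int_a^b f(t)\Delta t$, which closes part (i). For part (ii) the same steps, with $f^{\nabla}(t_0)=-(f^\star)^{\hat{\Delta}}(-t_0)$ in place of the delta relation, produce $-F^\star$ as a delta antiderivative of $f^\star$ on $\T^\star$ and give the stated identity.

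The single point demanding care is the interplay of two sign reversals. The dualizing map $\xi(t)=-t$ reverses orientation, so the limits pass from $[a,b]$ to $[-b,-a]$; simultaneously, Lemma~\ref{delta_star} introduces a minus sign in the derivative, forcing $-F^\star$ rather than $F^\star$ to be the antiderivative of $f^\star$. The main obstacle is therefore bookkeeping: one must verify that these two effects \emph{cooperate} to preserve the sign of the result rather than cancel or double it. The computation above shows they combine exactly, leaving no spurious sign, which is why no assumption on the regularity of $\T$ is needed.
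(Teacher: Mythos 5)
Your proof is correct: the paper itself states Proposition~\ref{int_star} without proof, citing \citep{Caputo}, and your argument --- transporting a delta antiderivative $F$ of $f$ to the nabla antiderivative $-F^\star$ of $f^\star$ via Lemma~\ref{delta_star} and then applying the fundamental theorem of calculus on both time scales --- is precisely the duality proof given in that reference. The sign bookkeeping you flag (the minus sign from the derivative duality cancelling against the orientation reversal of the integration limits from $[a,b]_{\T}$ to $[-b,-a]_{\T^\star}$) is handled correctly, as is the interchange of rd- and ld-continuity needed for part (ii).
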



\section{Linear control systems}
\label{sec:lcs}

Let us consider a time-varying system defined on a given time scale $\T$:
\begin{equation}
\label{def_trans}
x^\Delta(t)=A(t)x(t)
\end{equation}
with $t \in \T^\kappa$, $t\geq t_0$, $t_0\in \T$, $A(t)\in\R^{n\times n}$.
Recall that by \emph{transition function} one means the unique forward solution
of the system (\ref{def_trans}) with initial condition $x(t_0)=I$,
where $I$ denotes the identity matrix $I\in\R^{n\times n}$.
Its value at point $t\in\T$ is denoted
by $\Phi_A(t,t_0)$. When $A$ is time invariant, we denote the solution
of (\ref{def_trans}) with initial condition $x(t_0)=I$
by $e_A(t,t_0)$, and call it the \emph{exponential matrix function}.
There are important distinctions between the two notations,
as $\Phi_A(t,t_0)=e_A(t,t_0)$ if and only if $A$ is a constant matrix.

By the \emph{backward system} (to the system (\ref{def_trans})) we mean
\[y^{\hat{\nabla}}(s)=\bar{A}(s)y(s) \, , \]
defined on the dual time scale $\T^\star$
with $s \le s_0 = - t_0$, $s_0 \in \T^\star$,
$s\in(\T^\star)_\kappa$, and $\bar{A}(s):=-A^{\star}(s)$.

\begin{proposition}
\label{lin_aut}
Let $\T^\star$ be a time scale. Then the time-varying system
\begin{equation}
\label{def_trans:d}
y^{\hat{\nabla}}(s)=\bar{A}(s)y(s),\;\;\;y(s_0)=y_0 \, ,
\end{equation}
where $s_0 \in \T^\star$, $s \in(\T^\star)_\kappa$, and $\bar{A}\in\R^{n\times n}$,
has a unique backward solution. This solution is of the form $y(s)=\Psi_{\bar{A}}(s,s_0)y_0$
for any $s\leq s_0$, where $\Psi$ denotes the transition function dual to $\Phi$, \textrm{i.e.},
$\Psi_{\bar{A}}(s,s_0) = (\Phi_{-\bar{A}^\star})^{\star}(-s,-s_0)$.
\end{proposition}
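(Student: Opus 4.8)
The plan is to use Caputo's duality (Lemma~\ref{delta_star}) to convert the backward Cauchy problem (\ref{def_trans:d}) on $\T^\star$ into an ordinary forward (delta) Cauchy problem on $\T$, for which existence and uniqueness of the \emph{forward} solution is classical and, importantly, holds without any regressivity assumption. The duality dictionary reverses signs and exchanges $\hat{\nabla}$-derivatives on $\T^\star$ with $\Delta$-derivatives on $\T$; reading it in the direction $\T^\star\rar(\T^\star)^\star=\T$ should turn (\ref{def_trans:d}) into the system driven by $-\bar{A}^\star$ (which, since $\bar{A}=-A^\star$, is just the original matrix $A$).

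First I would set $z:=y^\star$, a function on $\T$, so that $y=z^\star$ and $y(s)=z(-s)$. Lemma~\ref{delta_star}, applied to $z$ (its scalar statement extends componentwise to vector- and matrix-valued maps), gives $(y^\star)^\Delta(-s)=z^\Delta(-s)=-(z^\star)^{\hat{\nabla}}(s)=-y^{\hat{\nabla}}(s)$, that is
\[
y^{\hat{\nabla}}(s)=-(y^\star)^\Delta(-s), \qquad s\in(\T^\star)_\kappa .
\]
Substituting this into (\ref{def_trans:d}), putting $t=-s\in\T^\kappa$, and using $\bar{A}(s)=\bar{A}(-t)=\bar{A}^\star(t)$, the backward system becomes the forward linear system
\[
z^\Delta(t)=-\bar{A}^\star(t)\,z(t), \qquad t\in\T^\kappa ,
\]
on $\T$, while the initial condition $y(s_0)=y_0$ becomes $z(-s_0)=y_0$, an initial condition for $z$ at the point $t_0=-s_0\in\T$.

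I would then invoke existence and uniqueness of the forward solution of a linear delta system on a time scale---only the forward solution is needed, so no regressivity is required---to obtain the unique $z(t)=\Phi_{-\bar{A}^\star}(t,-s_0)\,y_0$ for $t\geq -s_0$. Dualizing back via $y(s)=z(-s)$ yields $y(s)=\Phi_{-\bar{A}^\star}(-s,-s_0)\,y_0$, valid for all $s\leq s_0$ (since $t\geq t_0\Leftrightarrow -s\geq -s_0\Leftrightarrow s\leq s_0$), which by the definition of the dual transition function is exactly $\Psi_{\bar{A}}(s,s_0)\,y_0$. Uniqueness transfers for free: the correspondence $y\leftrightarrow y^\star$ is a bijection intertwining $\hat{\nabla}$-solutions on $\T^\star$ with $\Delta$-solutions on $\T$ (Lemma~\ref{delta_star} is reversible), so a second backward solution would yield a second forward solution of the same Cauchy problem, contradicting forward uniqueness.

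I expect the only real difficulty to be bookkeeping: keeping the sign reversals and the substitution $t=-s$ consistent, and dualizing the two-variable transition function in the slot matching the definition of $\Psi_{\bar{A}}$ so that the domains line up. The one structural point worth checking explicitly is that Lemma~\ref{delta_star} may legitimately be read in the reverse direction---with base time scale $\T^\star$ and dual $\T$---and applied to matrix-valued functions columnwise; everything else is routine verification.
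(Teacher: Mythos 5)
Your proposal is correct and follows essentially the same route as the paper: both use Lemma~\ref{delta_star} to set up the bijective correspondence $y \leftrightarrow y^\star$ between the backward problem on $\T^\star$ and the forward delta problem $z^\Delta(t) = -\bar{A}^\star(t)z(t)$, $z(-s_0)=y_0$ on $\T$, and then invoke forward existence and uniqueness (valid without regressivity) before dualizing back to get $y(s)=\Phi_{-\bar{A}^\star}(-s,-s_0)y_0=\Psi_{\bar{A}}(s,s_0)y_0$. The only cosmetic difference is direction of presentation --- the paper starts from the forward system \eqref{def_trans} on $\T$ and rewrites it on $\T^\star$, whereas you start from \eqref{def_trans:d} and dualize to $\T$ --- and your explicit bijection argument for transferring uniqueness is, if anything, slightly more careful than the paper's one-line assertion.
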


\begin{proof}
Let $t_0 := -s_0$ and $t:=-s$ for any $s\in(\T^\star)_\kappa$.
From Lemma~\ref{delta_star}
it follows that the system (\ref{def_trans}) defined
on $\T$ (the dual of $\T^\star$) with initial condition $x(t_0)=y_0$
can be rewritten on $\T^\star$ as
\begin{equation}
\label{IVP_T*}
y^{\hat{\nabla}}(s)=-A^\star(s)y(s),\;\;\;y(s_0)=y_0 \, ,
\end{equation}
where $y$ is the dual vector function to $x$, \textrm{i.e.}, $y(s)=x^\star(-s)$.
Because system (\ref{def_trans}) with initial condition $x(t_0)=y_0$
has a unique forward solution on $\T$ \citep{Bh,Bill_2009},
then system (\ref{IVP_T*}) has also a unique backward solution
$y(s)=\Psi_{-A^\star}(s,s_0)y_0$ on $\T^\star$.
\end{proof}

It follows from Proposition~\ref{lin_aut}
that the solutions of (\ref{def_trans})
and (\ref{def_trans:d}) are related by
$x(t) = y^\star(-t)$ and $y(s) = x^\star(-s)$.

\begin{remark}
In the case of a time-invariant system \eqref{def_trans},
\textrm{i.e.}, when $A$ is a constant matrix, we have
$e_A(t,t_0)=\left(\hat{e}_{\bar{A}}\right)^\star(-t,-t_0)$
with $\hat{e}_{\bar{A}}$ the nabla exponential function
on $\T^\star$ for the constant matrix $\bar{A}=-A^{\star}$.
\end{remark}

Let us consider now a time-varying control system
on the time scale $\T$:
\begin{equation}
\label{nonh_T}
x^\Delta(t)=A(t)x(t)+B(t)u(t),\;\;\;x(t_0)=x_0,
\end{equation}
where $u(\cdot)$ is a rd-continuous control taking values
in $\R^m$, and $A(t)\in\R^{n\times n}$ and $B(t)\in\R^{n\times m}$
are time-dependent matrices defined on $\T^\kappa$.

\begin{proposition}
\label{cs:lin_aut}
Let $\T^\star$ be the dual time scale to $\T$,
with $\T$ the time scale where the control system (\ref{nonh_T})
is defined. Then the control system
 \begin{equation}
\label{nonh_T:d}
  y^{\hat{\nabla}}(s)=\bar{A}(s)y(s)+\bar{B}(s)v(s),\;\;\;y(s_0)=x_0,
 \end{equation}
where $s_0 = - t_0$, $s \le s_0$, $s \in (\T^\star)_\kappa$,
$\bar{A}(s) = -A^\star(s)$, and $\bar{B}(s)=-B^\star(s)$,
has a unique backward solution. The solution is given by
\[y(s)=\Psi_{\bar{A}}(s,s_0)x_0+\int_s^{s_0}\Psi_{\bar{A}}(s,-\hat{\rho}(\varsigma))
\bar{B}(\varsigma)v(\varsigma)\hat{\nabla}\varsigma\]
for any $s\leq s_0$, where $\Psi$ is the transition function dual to $\Phi$,
\textrm{i.e.}, $\Psi_{\bar{A}}(s,s_0) = (\Phi_A)^{\star}(-s,-s_0)$.
\end{proposition}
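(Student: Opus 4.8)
The plan is to reduce the claim to the already-established homogeneous case (Proposition~\ref{lin_aut}) together with the classical delta variation-of-constants formula on $\T$, and then to transport that formula to $\T^\star$ by means of the duality machinery of Section~\ref{sec:dual}. Concretely, I would put $t_0:=-s_0$, $t:=-s$, and let $u:=v^\star$ be the rd-continuous control on $\T$ dual to $v$. Then \eqref{nonh_T:d} is exactly the dual of the forward control system \eqref{nonh_T} driven by $u$: applying Lemma~\ref{delta_star} to $x^\Delta$ and using $A(-s)=A^\star(s)$, $B(-s)=B^\star(s)$, $x(-s)=x^\star(s)=y(s)$ and $u(-s)=v(s)$, one checks that $y=x^\star$ solves \eqref{nonh_T:d} if and only if $x$ solves \eqref{nonh_T} with $x(t_0)=x_0$. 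Since \eqref{nonh_T} admits a unique forward solution on $\T$ with no regressivity assumption (only the forward solution being needed), existence and uniqueness of the backward solution of \eqref{nonh_T:d} follow at once, exactly as in the proof of Proposition~\ref{lin_aut}.

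It then remains to produce the explicit formula. For this I would start from the classical delta variation-of-constants representation of the unique forward solution of \eqref{nonh_T},
\begin{equation*}
x(t)=\Phi_A(t,t_0)x_0+\int_{t_0}^{t}\Phi_A(t,\sigma(\tau))B(\tau)u(\tau)\,\Delta\tau,
\end{equation*}
which is valid on an arbitrary time scale. The first summand dualizes directly: by Proposition~\ref{lin_aut} we have $\Phi_A(-s,-s_0)=\Psi_{\bar A}(s,s_0)$, so $\Phi_A(t,t_0)x_0=\Psi_{\bar A}(s,s_0)x_0$. For the integral term I would apply Proposition~\ref{int_star}(i) with the substitution $\tau=-\varsigma$, turning $\int_{t_0}^{t}(\cdot)\,\Delta\tau$ into $\int_{s}^{s_0}(\cdot)^\star\,\hat\nabla\varsigma$ (recall $-t=s$, $-t_0=s_0$, $s\le s_0$). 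The kernel is rewritten via the jump-operator duality \eqref{hat_rho}, namely $\sigma(-\varsigma)=-\hat\rho(\varsigma)$, so that $\Phi_A(t,\sigma(\tau))$ passes to $\Psi_{\bar A}(s,-\hat\rho(\varsigma))$, while $B(\tau)u(\tau)$ dualizes to $\bar B(\varsigma)v(\varsigma)$ through $\bar B=-B^\star$ and $v=u^\star$. Collecting the two summands yields the asserted representation, and by the uniqueness already established this is the backward solution of \eqref{nonh_T:d}.

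The derivative and integral dualizations are routine, being direct citations of Lemma~\ref{delta_star} and Proposition~\ref{int_star}. The step demanding genuine care—and the one I expect to be the main obstacle—is the sign and argument bookkeeping in the integrand. One must simultaneously track the sign conventions encoded in $\bar A=-A^\star$ and $\bar B=-B^\star$, the reversal $\sigma\leftrightarrow\hat\rho$ under duality, and the orientation of the integration limits, and then verify that these combine to reproduce \emph{exactly} the kernel $\Psi_{\bar A}(s,-\hat\rho(\varsigma))$ and the factor $\bar B(\varsigma)v(\varsigma)$ of the statement rather than a sign-flipped variant. A convenient check at this point is to specialize to $\T=\R$, where $\sigma=\hat\rho=\id$ and the formula must collapse to the ordinary backward variation-of-constants formula for $y'=\bar A y+\bar B v$, thereby confirming that all the signs are consistent.
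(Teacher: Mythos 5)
Your proposal is correct and follows essentially the same route as the paper's own proof: dualize \eqref{nonh_T:d} to the forward system \eqref{nonh_T} on $\T$ via $t=-s$, invoke existence and uniqueness of the forward solution without regressivity, and transport the delta variation-of-constants formula back to $\T^\star$ using Lemma~\ref{delta_star}, Proposition~\ref{int_star}(i), and the jump-operator relation \eqref{hat_rho}. Your explicit sign bookkeeping (noting that $\bar B=-B^\star$ and $v=u^\star$ absorb the minus sign, with the sanity check at $\T=\R$) is if anything more careful than the paper's terse derivation, which leaves those steps implicit.
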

\begin{proof}
Let $s:=-t$ for any $t\in\T^\kappa$.
So $s\in (\T^\star)_\kappa$ and $s\leq s_0$. The control system
\eqref{nonh_T} defined on $\T$
 can be rewritten on the time scale $\T^\star$ as
\begin{equation}\label{control_sys_T*}
   y^{\hat{\nabla}}(s)=\bar{A}(s)y(s)+\bar{B}(s)v(s) \, ,
\end{equation}
where $y$ and $v$ are the dual vector to $x$ and $u$,
respectively, \textrm{i.e.}, $y(s)=x^\star(-s)$
and $v(s)=u^\star(-s)$. Equation (\ref{nonh_T}) has a unique forward solution on $\T$
\citep{Bh,Bill_2009} given by
\[x(t)=\Phi_A(t,t_0)x_0 + \int_{t_0}^{t}\Phi_A(t,\sigma(\tau))B(\tau)u(\tau)\Delta\tau \, .\]
Thus, by definition of dual time scale and by (\ref{hat_rho}), the dual control system
(\ref{control_sys_T*}) on $\T^*$ has also a unique, but backward, solution
\[y(s)=\Psi_{-A^\star}(s,s_0)x_0-\int_s^{s_0}\Psi_{-A^\star}(s,
-\hat{\rho}(\varsigma))B^\star(\varsigma)u^\star(\varsigma)\hat{\nabla}\varsigma.\]
\end{proof}

It follows from Proposition~\ref{cs:lin_aut}
that state and control variables of systems (\ref{nonh_T})
and (\ref{nonh_T:d}) are related by
$\left(x(t),u(t)\right) = \left(y^\star(-t),v^\star(-t)\right)$
and $\left(y(s),v(s)\right) = \left(x^\star(-s),u^\star(-s)\right)$.

\begin{remark}
In the particular case when $\bar{A}$
and $\bar{B}$ are constant matrices, then
the solution of (\ref{nonh_T:d}) given
in Proposition~\ref{cs:lin_aut} takes the form
\[y(s)=\hat{e}_{\bar{A}}(s,s_0)x_0
+\int_s^{s_0}\hat{e}_{\bar{A}}(s,-\hat{\rho}(\varsigma))\bar{B}
v(\varsigma)\hat{\nabla}\varsigma\]
for any $s\leq s_0$.
\end{remark}


\section{Controllability}
\label{sec:cont}

Let $\T^\star$ be a given time scale with operators $\hat{\sigma}$, $\hat{\rho}$,
$\hat{\mu}$, $\hat{\nu}$, $\hat{\Delta}$, and $\hat{\nabla}$.
Let us consider the following linear time-varying system defined on $\T^\star$:
\begin{equation*}
 \begin{split}
  \bar{\Lambda}:\;\;\; y^{\hat{\nabla}}(s)=\bar{A}(s)y(s)+\bar{B}(s)v(s)\\
                  \gamma(s)=\bar{C}(s)y(s)+\bar{D}(s)v(s)
  \end{split}
\end{equation*}
with initial condition $y(s_0)=y_0$, $s\leq s_0$,
where $y(s)\in\R^n$ is the state of the system at time $s$,
$v(s)\in \R^m$ is the control value at time $s$,
and $\bar{A}(s)\in\R^{n\times n}$, $\bar{B}(s)\in\R^{n\times m}$, $\bar{C}(s)\in\R^{p\times n}$,
and $\bar{D}(s)\in\R^{p\times m}$, $p,m\leq n$, are ld-continuous as functions of $s$.

We say that system $\bar{\Lambda}$ is \emph{controllable} if for any two states
$y_0\in\R^n$ and $y_1\in\R^n$ there exist $s_0,\;s_1\in\T^\star$,  $s_1<s_0$,
and a piecewise ld-continuous control $v(s)$, $s\in[s_1,s_0]_{\T^\star}$,
such that for $y_0=y(s_0)$ one has  $y(s_1)=y_1$.
The set of all points that can be reachable from the point $y_0=y(s_0)$ in time
$s_1\in\T^\star$ is called the \emph{reachable set} from $y_0$
in time $s_1$ and is denoted by $\mathcal{\bar{R}}_{y_0}(s_1,s_0)$.
The set of all points reachable from $y_0=y(s_0)$ in finite time
$s\in\T^\star$ will be denoted by $\mathcal{\bar{R}}_{y_0}(s_0)$.
Note that $\mathcal{\bar{R}}_{y_0}(s_1,s_0)=\Psi_{\bar{A}}(s_1,s_0)+\mathcal{\bar{R}}_{0}(s_1,s_0)$.

Let us assume that $\T^\star$ is a time scale for which $\hat{\rho}$
is sufficiently ld-continuous $\hat{\nabla}$ differentiable.
Define the sequence of matrices
\begin{equation}
\label{K}
\bar{K}_j(s):=-\left.\frac{\partial^j}{\hat{\nabla} z^j}\left[\Psi_{\bar{A}}(\hat{\rho}(s),
-\hat{\rho}(z))\bar{B}(z)\right]\right|_{z=s} \, \quad j = 0, 1, 2, \ldots
\end{equation}

\begin{theorem}\label{controllability_tv}
Let $r$ be a positive integer such that $\bar{B}(\cdot)$
is $r$-times ld-continuously $\hat{\nabla}$ differentiable and both
$\hat{\rho}(\cdot)$ and $\bar{A}(\cdot)$ are ld-continuously $r-1$-times $\hat{\nabla}$ differentiable
on $[s_0,s_1]_{\T^\star}$. Then the linear system $\bar{\Lambda}$ is controllable
on $[s_1,s_0]_{\T^\star}$ if for some $s_c\in(s_1,s_0]_{\T^\star}$
the matrix \[(\bar{K}_0(s_c)\;\;\bar{K}_1(s_c)\;\ldots\;\bar{K}_r(s_c))\]
is of full rank, where $\bar{K}_j$, $j=0,1,\ldots,r$, are the matrices given by (\ref{K}).
\end{theorem}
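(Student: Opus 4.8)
The plan is to establish this sufficient condition by the classical time-varying argument (Silverman--Meadows), transcribed into the nabla calculus on $\T^\star$, using the explicit variation-of-parameters formula of Proposition~\ref{cs:lin_aut} as the point of departure. Throughout I would use that the dual transition matrix $\Psi_{\bar{A}}$ inherits from $\Phi_A$ both invertibility and the cocycle identity $\Psi_{\bar{A}}(s,s_0)=\Psi_{\bar{A}}(s,\bar{s})\,\Psi_{\bar{A}}(\bar{s},s_0)$. First I would reduce controllability to a statement about one subspace: by Proposition~\ref{cs:lin_aut}, every state reachable at time $s_1$ from $y_0=y(s_0)$ has the form $\Psi_{\bar{A}}(s_1,s_0)y_0+z$ with
\[
z=\int_{s_1}^{s_0}\Psi_{\bar{A}}\bigl(s_1,-\hat{\rho}(\varsigma)\bigr)\bar{B}(\varsigma)v(\varsigma)\,\hat{\nabla}\varsigma \, ,
\]
so $\mathcal{\bar{R}}_{y_0}(s_1,s_0)=\Psi_{\bar{A}}(s_1,s_0)y_0+\mathcal{\bar{R}}_0(s_1,s_0)$ as recorded before the statement. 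Hence $\bar{\Lambda}$ is controllable on $[s_1,s_0]_{\T^\star}$ as soon as $\mathcal{\bar{R}}_0(s_1,s_0)=\R^n$, that is, as soon as no nonzero row vector $\eta\in\R^n$ annihilates every such $z$.

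Next I would convert this annihilation condition into a pointwise identity for the integrand. If $\eta^{T}z=0$ for all admissible $v$, then, since $v$ ranges over all piecewise ld-continuous controls and $\bar{B}$ is ld-continuous, the fundamental lemma of the nabla calculus of variations forces $\eta^{T}\Psi_{\bar{A}}(s_1,-\hat{\rho}(\varsigma))\bar{B}(\varsigma)=0$ for all $\varsigma\in[s_1,s_0]_{\T^\star}$. Applying the cocycle identity with the intermediate point $\hat{\rho}(s_c)$ and the invertibility of $\Psi_{\bar{A}}$, this is equivalent to $\zeta^{T}\Psi_{\bar{A}}(\hat{\rho}(s_c),-\hat{\rho}(\varsigma))\bar{B}(\varsigma)\equiv 0$, where $\zeta^{T}:=\eta^{T}\Psi_{\bar{A}}(s_1,\hat{\rho}(s_c))$ is again nonzero. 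Note that the factor $\Psi_{\bar{A}}(\hat{\rho}(s_c),-\hat{\rho}(\varsigma))$ matches exactly the kernel appearing in the definition \eqref{K} of $\bar{K}_j$.

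Then I would differentiate. Because $\bar{B}$ is $r$-times and $\hat{\rho},\bar{A}$ are $(r-1)$-times ld-continuously $\hat{\nabla}$ differentiable on $[s_0,s_1]_{\T^\star}$, the map $\varsigma\mapsto\Psi_{\bar{A}}(\hat{\rho}(s_c),-\hat{\rho}(\varsigma))\bar{B}(\varsigma)$ is $r$-times $\hat{\nabla}$ differentiable near $s_c$; since it vanishes identically on the left-neighbourhood $[s_1,s_c]_{\T^\star}$ (here the requirement $s_c\in(s_1,s_0]_{\T^\star}$ supplies the room to the left that the backward derivative needs), each of its $\hat{\nabla}$ derivatives vanishes at $s_c$. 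By the very definition \eqref{K} this gives $\zeta^{T}\bar{K}_j(s_c)=0$ for $j=0,1,\dots,r$, i.e. $\zeta^{T}(\bar{K}_0(s_c)\;\cdots\;\bar{K}_r(s_c))=0$ with $\zeta\neq 0$, contradicting the full-rank hypothesis. Thus no annihilating $\eta$ exists and $\bar{\Lambda}$ is controllable.

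I expect the main obstacle to be the second step: justifying on an arbitrary time scale that controllability is equivalent to the non-existence of a vector annihilating $\Psi_{\bar{A}}(s_1,-\hat{\rho}(\cdot))\bar{B}(\cdot)$, and in particular that annihilation of the nabla integral over all piecewise ld-continuous controls really forces the integrand coefficient to vanish at every point. The higher-order differentiation is then routine once the smoothness is in place, the product and chain bookkeeping being absorbed into \eqref{K}. More in the spirit of the paper, one could instead bypass these steps by dualising: through the correspondence $(x(t),u(t))=(y^{\star}(-t),v^{\star}(-t))$ and the time reversal $t=-s$, backward controllability of $\bar{\Lambda}$ on $[s_1,s_0]_{\T^\star}$ is equivalent to forward controllability of \eqref{nonh_T} on $[-s_0,-s_1]_{\T}$, while an iterated use of Lemma~\ref{delta_star} identifies $\bar{K}_j$ with the dual of the forward Kalman matrices of \citep{DGJM}; there the difficulty shifts to tracking the signs $(-1)^j$ and the nested jump operators through the higher-order duality.
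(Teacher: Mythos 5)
Your argument is correct in substance but takes a genuinely different route from the paper's. The paper proves Theorem~\ref{controllability_tv} purely by duality transfer: setting $t:=-s$ and invoking Lemma~\ref{delta_star}, it rewrites $\bar{\Lambda}$ as the forward system $\Lambda$ on the dual time scale $\T$, quotes the forward controllability criterion of \citep{DGJM} (with the parenthetical remark that the regressivity assumption made there can be dropped), and then dualizes the forward matrices $K_j(t)=-\frac{\partial^j}{\Delta s^j}\left[\Phi_A(\sigma(t),\sigma(s))B(s)\right]\big|_{s=t}$ through Lemma~\ref{delta_star} to identify $\bar{K}_j=K_j^\star$ and $\Psi_{\bar{A}}=\Phi^\star_{-A^\star}$ --- which is exactly the alternative you sketch in your closing paragraph, nested jump operators and signs included. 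You instead redo the Silverman--Meadows argument from first principles in the nabla calculus: the annihilator reduction of $\mathcal{\bar{R}}_0(s_1,s_0)$, the Gramian-type choice $v=\bigl(\eta^{T}\Psi_{\bar{A}}(s_1,-\hat{\rho}(\cdot))\bar{B}(\cdot)\bigr)^{T}$ forcing the kernel to vanish pointwise, and $r$-fold $\hat{\nabla}$ differentiation at $s_c$. Your route buys a self-contained proof that makes visible where each smoothness hypothesis enters; the paper's route is much shorter and keeps the backward theory automatically aligned with the forward literature, at the cost of leaning entirely on the cited forward result. One caveat you should be aware of: your direct proof explicitly uses invertibility of $\Psi_{\bar{A}}$ --- once to guarantee $\zeta\neq 0$, and once to run the cocycle identity through $\hat{\rho}(s_c)$ on \emph{both} sides of $s_c$ (without invertibility the forward-composition cocycle gives the vanishing of $\zeta^{T}\Psi_{\bar{A}}(\hat{\rho}(s_c),-\hat{\rho}(\varsigma))\bar{B}(\varsigma)$ only for $\varsigma$ on one side of $s_c$, while the $\hat{\nabla}$ derivatives at $s_c$ sample the other side). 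On an arbitrary time scale this invertibility holds only under progressivity of $\bar{\Lambda}$ (equivalently, regressivity of the dual $\Lambda$), which the theorem does not assume; this is precisely the point the paper waves through with the claim that the proof of \citep{DGJM} ``is still valid without the regressivity assumption.'' Your version makes the dependence explicit where the paper's version hides it inside the citation, but neither text fully discharges it, so you are at the same level of rigor as the source the paper relies on.
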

\begin{proof}
Let $t := -s$ (and $t_0 :=-s_0$) so that $t$ belongs to $\T$ -- the dual time scale of $\T^\star$ --
with operators $\sigma$, $\rho$, $\mu$, $\nu$, $\Delta$, and $\nabla$.
Using Lemma~\ref{delta_star} we can rewrite system $\bar{\Lambda}$ on $\T$ as
\begin{equation}
\label{sys_T}
\begin{split}
\Lambda:\;\;\; x^\Delta(t) &= A(t)x(t)+B(t)u(t)\, , \quad x(t_0) =y_0\, ,\\
z(t) &= C(t)x(t)+D(t)u(t)\, ,
\end{split}
\end{equation}
where $A(t)\in\R^{n\times n}$, $B(t)\in\R^{n\times m}$, $C(t)\in\R^{p\times n}$,
and $D(t)\in\R^{p\times m}$, $p,m\leq n$, are rd-continuous as functions of $t$.
System $\Lambda$ is controllable on an interval $[t_0,t_1]_{\T}$ if for some $t_c\in[t_0,t_1)_{\T}$
\[{\rm rank}(K_0(t_c)\;\;K_1(t_c)\;\ldots\;K_r(t_c))=n\]
with matrices $K_j(s):=-\frac{\partial^j}{\Delta^j}\left[\Phi_A(\sigma(t),\sigma(s))B(s)\right]_{|_{s=t}}$,
$j=0,1,\ldots,r$. This fact was proved in \citep{DGJM} for a regressive system $\Lambda$.
The proof is still valid without the regressivity assumption.
The dual system to $\Lambda$ is $\bar{\Lambda}$ where $y$ is the dual vector to $x$,
$\bar{A}(s):=-A^\star(s)$, $\bar{B}(s):=-B^\star(s)$, $\bar{C}(s):=C^\star(s)$,
$\bar{D}(s):=D^\star(s)$ and $A^\star$, $B^\star$, $C^\star$, and $D^\star$
are the dual function matrices to $A$, $B$, $C$ and $D$, respectively. Moreover,
from the form of matrices $K_j$ and Lemma~\ref{delta_star}, it follows that the dual matrix $K^\star$ is of form
\[K_j^\star(-t)=-\left.\frac{\partial^j}{\hat{\nabla} s^j}\left[
\Phi^*_{-A^*}\left(\hat{\rho}(s),-\hat{\rho}(z)\right)B^\star(z)\right]\right|_{z=s},\]
where $\Phi^\star_{-A^*}$ is the dual matrix to $\Phi_A$.
We obtain (\ref{K}) by putting $\bar{K}=K^\star$
and $\Psi_{\bar{A}}=\Phi^\star_{-A^*}$.
\end{proof}


\begin{example}
  Let us consider the control system
  \begin{equation}
\label{eq:ex1}
   \begin{cases}
    y_1^{\hat{\nabla}} = y_1+s^2y_2+v_1\\
    y_2^{\hat{\nabla}} = -y_2-s v_2
   \end{cases}
  \end{equation}
  defined on the time scale $\T^\star$ dual of $\T=\bigcup_{k\in\Z} [2k,2k+1]$. Because function
  \[\Psi_{\bar{A}}=
   \left[ \begin {array}{cc} {2}^{k-l}{{\rm e}^{s-z}}&{2}^{k-l}{s}^{2}
 \left( s-z \right) {{\rm e}^{s-z}}+{2}^{k-l-1}{s}^{2} \left( k-l
 \right) {{\rm e}^{s-z}}\\\noalign{\medskip}0&{2}^{k-l}{{\rm e}^{s-z}}
\end {array} \right]
  \]
  is the transition matrix of this system, then
  \begin{equation*}
   \begin{split}
    K_0&=\left(
         \begin{array}{c}
           1 \\
           -s \\
         \end{array}
       \right) \, ,\\
     K_1&=-\frac{\partial}{\hat{\nabla}z}
  \left.\left[ \begin {array}{c} {2}^{k-l}{{\rm e}^{s-z}}- \left( {2}^{k-l}{s
}^{2} \left( s-z \right) {{\rm e}^{s-z}}+{2}^{k-l-1}{s}^{2} \left( k-l
 \right) {{\rm e}^{s-z}} \right) s\\\noalign{\medskip}-s{2}^{k-l}{
{\rm e}^{s-z}}\end {array}
       \right]\right|_{z=s}.
   \end{split}
  \end{equation*}
For any $s\in(2l-1,z]$, $s\neq 0$,
${\rm rank}\left(
\begin{array}{cc}
K_0 & K_1 \\
\end{array}
\right)=2$.
If $s=2l-1$, then for $z\in(2l-3,2l-2]$
${\rm rank}\left(
               \begin{array}{cc}
                 K_0 & K_1 \\
               \end{array}
             \right)=2 \Leftrightarrow s\neq 0$.
We conclude that system \eqref{eq:ex1} is controllable on $\R^2\setminus \{0\}$.
\end{example}


Let us consider now a time-invariant system:
\begin{equation}
\label{ukl_lin_ti_1}
\begin{split}
y^{\hat{\nabla}}(s) &= \bar{A}y(s)+\bar{B}v(s)\\
  \gamma(s) &= \bar{C}y(s)
\end{split}
\end{equation}
with initial condition $y(s_0)=y_0$, $s\leq s_0$,
and $\bar{A}\in\R^{n\times n}$, $\bar{B}\in\R^{n\times m}$
and $\bar{C}\in\R^{p\times n}$ constant matrices.

\begin{theorem}
\label{thm:4.3}
Let us assume that the interval $[s_1,s_0]_{\T^\star}$ consists of at least $n+1$ elements.
The following conditions are equivalent:
\begin{enumerate}
  \item [(i)] $\mathcal{\bar{R}}_{y_0}(s_1,s_0)=\R^n$;
  \item [(ii)] ${\rm rank}(\bar{P}_0\bar{B},\bar{P}_1\bar{B},
  \ldots,\bar{P}_{n-1}\bar{B})=n$ where matrices $\bar{P}_i$, $i=0,1,\ldots,n-1$, are given recursively by
  $\bar{P}_0=I$ and $\bar{P}_{k+1}=(\bar{A}-\lambda_{k+1}I)\bar{P}_k$,
  $k=0,1,\ldots,n-1$, with $\lambda_1,\ldots,\lambda_n$ the eigenvalues of the matrix $\bar{A}$;
  \item [(iii)] ${\rm rank}(\bar{B},\bar{A}\bar{B},\ldots,\bar{A}^{n-1}\bar{B})=n$;
  \item[(iv)] the system (\ref{ukl_lin_ti_1}) is controllable.
\end{enumerate}
\end{theorem}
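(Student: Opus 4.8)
The plan is to transport the problem to the dual forward system on $\T$, where the time-invariant Kalman theorem is already available, and then pull the statements back; the equivalence (ii)$\Leftrightarrow$(iii) I would treat separately as a purely algebraic fact. First I would set $t=-s$ and $t_0=-s_0$. By Lemma~\ref{delta_star}, the backward system (\ref{ukl_lin_ti_1}) on $\T^\star$ is the dual of a forward delta system
\begin{equation*}
x^\Delta(t)=Ax(t)+Bu(t)\, ,\qquad z(t)=Cx(t)\, ,
\end{equation*}
on $\T$; since the matrices are constant and the dual of a constant matrix is itself, one has $A=-\bar{A}$, $B=-\bar{B}$, and $C=\bar{C}$. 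The state/control correspondence of Proposition~\ref{cs:lin_aut} gives a bijection between the trajectories of the two systems, under which the reachable set $\mathcal{\bar{R}}_{y_0}(s_1,s_0)$ of the backward system is carried onto the reachable set of the forward system on $[t_0,t_1]_\T$, and the hypothesis that $[s_1,s_0]_{\T^\star}$ has at least $n+1$ elements becomes the corresponding statement for $[t_0,t_1]_\T$ (immediate from the bijection $\xi(t)=-t$).

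Next I would invoke the time-invariant Kalman theorem for forward delta systems on $\T$, established in \citep{DGJM} (whose proof does not require the regressivity assumption): for the forward system, the reachable set equals $\R^n$ if and only if ${\rm rank}(B\;\;AB\;\ldots\;A^{n-1}B)=n$ if and only if the system is controllable, the $n+1$-element hypothesis being exactly what guarantees that the Kalman matrix detects reachability on a time scale with discrete pieces. Because $A=-\bar{A}$ and $B=-\bar{B}$, one has $A^jB=(-1)^{j+1}\bar{A}^j\bar{B}$, so the forward and backward Kalman matrices differ only by the sign of their column blocks and therefore share the same rank. Undoing the duality then yields the chain (i)$\Leftrightarrow$(iii)$\Leftrightarrow$(iv) for the backward system (\ref{ukl_lin_ti_1}).

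It remains to establish (ii)$\Leftrightarrow$(iii), which involves no dynamics and is insensitive to the time scale. Here the $\lambda_i$ are the eigenvalues of $\bar{A}$, so each $\bar{P}_k=\prod_{i=1}^{k}(\bar{A}-\lambda_i I)$ is a monic polynomial of degree $k$ in $\bar{A}$, that is $\bar{P}_k=\bar{A}^k+\sum_{i<k}c_{k,i}\bar{A}^i$ for suitable scalars $c_{k,i}$. Consequently the passage from the block matrix $(\bar{B}\;\;\bar{A}\bar{B}\;\ldots\;\bar{A}^{n-1}\bar{B})$ to $(\bar{P}_0\bar{B}\;\;\bar{P}_1\bar{B}\;\ldots\;\bar{P}_{n-1}\bar{B})$ is realized by a block-triangular transformation with identity diagonal blocks, hence invertible, so the two matrices span the same column space and have equal rank. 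This proves (ii)$\Leftrightarrow$(iii); note that Cayley--Hamilton gives $\bar{P}_n=0$, which is why powers beyond $\bar{A}^{n-1}$ are never needed.

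The step I expect to be the main obstacle is not the algebra but the careful bookkeeping of the $n+1$-element hypothesis through the duality: on a general time scale a time-invariant system may fail to fill $\R^n$ on intervals containing too few points, so one must verify that this cardinality condition is faithfully transferred between $[s_1,s_0]_{\T^\star}$ and $[t_0,t_1]_\T$ before the forward theorem may legitimately be applied.
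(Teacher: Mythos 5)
Your proposal is correct and follows essentially the same route as the paper: both pass via $t=-s$ to the dual forward system \eqref{sys_ti1_T} with $A=-\bar{A}$, $B=-\bar{B}$, $C=\bar{C}$, check that the duality carries reachable sets and the $n+1$-element hypothesis across, and then import the forward time-invariant equivalence -- though the paper cites \citep{BPaw} (which proves all four conditions equivalent at once and explicitly dispenses with regressivity), whereas you cite \citep{DGJM}, whose time-invariant theorem is stated under regressivity, so \citep{BPaw} is the cleaner source for your claim that no regressivity is needed. Your only other deviation is harmless and in fact makes the argument more self-contained: you prove (ii)$\Leftrightarrow$(iii) directly via the block-triangular column transformation (each $\bar{P}_k$ being a monic degree-$k$ polynomial in $\bar{A}$), and your sign bookkeeping $A^jB=(-1)^{j+1}\bar{A}^j\bar{B}$ correctly shows the forward and backward Kalman matrices have equal rank, a point the paper leaves implicit.
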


\begin{proof}
Let $t:=-s$ for any $s\in\T^\star$ and let $t_0:=-s_0$,
so that $t$ is an element of the dual time scale $\T$ (with operators $\sigma$, $\rho$,
$\mu$, $\nu$, $\Delta$, $\nabla$). Using Lemma~\ref{delta_star} we write
the dual system on $\T$:
\begin{equation}
\label{sys_ti1_T}
\begin{split}
x^\Delta(t) &= Ax(t)+Bu(t)\, ,\\
z(t) &= Cx(t)\, ,\\
x(t_0)&=y_0
\end{split}
\end{equation}
with $A=-\bar{A}$, $B=-\bar{B}$, and $C=\bar{C}$.
Since conditions
\begin{enumerate}
  \item [(a)] $\mathcal{R}_{y_0}(t_0,t_1)=\R^n$;
  \item [(b)] ${\rm rank}(P_0B,P_1B,\ldots,P_{n-1}B)=n$
  where matrices $P_i$, $i=0,1,\ldots,n-1$, are given recursively by
  $P_0=I$ and $P_{k+1}=(A-\lambda_{k+1}I)P_k$,
  $k=0,1,\ldots,n-1$, with $\lambda_1,\ldots,\lambda_n$ the eigenvalues of the matrix $A$;
  \item [(c)] ${\rm rank}(B,AB,\ldots,A^{n-1}B)=n$;
  \item [(d)] the system given by (\ref{sys_ti1_T}) is controllable;
\end{enumerate}
are equivalent on the time scale $\T$ (for the proof see \citep{BPaw}), then,
after coming back to the time scale $\T^\star$
we see that conditions $(i)$--$(iv)$ are also equivalent on the time scale $\T^\star$.
\end{proof}


\begin{example}
\label{ex1}
Let us consider the control system
\begin{equation}
\label{eq:ex2}
\begin{cases}
  y_1^{\hat{\nabla}}=y_2\\
  y_2^{\hat{\nabla}}=-3y_1-4y_2+v \, .
\end{cases}
\end{equation}
Independently of the time scale, we have
always ${\rm rank}(\bar{B},\bar{A}\bar{B})=2$. Thus the system \eqref{eq:ex2}
is controllable by items (iii) and (iv) of Theorem~\ref{thm:4.3}.
\end{example}


\section{Observability}
\label{sec:obs}

Let us consider the linear time-varying control system
\begin{equation}\label{ukl_lin_1}
\begin{split}
  \bar{\Lambda}:\;\;\; y^{\hat{\nabla}}(s)&=\bar{A}(s)y(s)+\bar{B}(s)v(s)\\
\gamma(s)&=\bar{C}(s)y(s) \, ,
\end{split}
\end{equation}
$s\leq s_0$, defined on a given time scale $\T^\star$
under a given initial condition $y(s_0)=y_0$. We say that such a system
$\bar{\Lambda}$ is \emph{observable on $[s_1,s_0]_{\T^\star}$}
if any initial state $y(s_0)=y_0$ can be uniquely determined
by the output function $\gamma(s)$ for $s\in(s_1,s_0]_{\T^\star}$.

Let us assume that the time scale $\T^\star$ is such that $\hat{\rho}$ is sufficiently
$\hat{\nabla}$ differentiable with ld-continuous derivatives.
Define the sequence of matrices
\begin{equation}
\label{L}
\bar{L}_j(s):=-\left.\frac{\partial^j}{\hat{\nabla} z^j}\left[
\bar{C}(s)\Psi_{\bar{A}}(s,z)\right]\right|_{z=s} \, , \quad j = 0, 1, 2, \ldots
\end{equation}

Using a similar reasoning as in Theorem~\ref{controllability_tv} we have the following:

\begin{theorem}
 Let $r$ be a positive integer such that the matrix function $\bar{C}(s)$ is $r$-times ld-continuously
 $\hat{\nabla}$ differentiable and both $\hat{\rho}(s)$ and $\bar{A}(s)$
 are $r-1$-times ld-continuously $\hat{\nabla}$ differentiable for any $s\in[s_0,s_1]_{\T^\star}$.
 Then the linear system (\ref{ukl_lin_1}) is observable
 on $[s_1,s_0]_{\T^\star}$ if for some $s_c\in(s_1,s_0]_{\T^\star}$ the matrix
 \[\left(
     \begin{array}{c}
       \bar{L}_0(s_c) \\
       \bar{L}_1(s_c) \\
       \vdots \\
       \bar{L}_r(s_c)\\
     \end{array}
   \right)\]
is of full rank, where $\bar{L}_j$ are the matrices given by (\ref{L}), $j=0,1,\ldots,r$.
\end{theorem}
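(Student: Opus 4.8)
The plan is to mirror the duality argument of Theorem~\ref{controllability_tv}, transporting the problem to the dual (forward, delta) time scale $\T$ on which the corresponding observability rank condition is already available. First I would set $t:=-s$ and $t_0:=-s_0$, so that $t$ ranges over $\T$ with its operators $\sigma,\rho,\mu,\nu,\Delta,\nabla$. Using Lemma~\ref{delta_star} together with the dual-matrix conventions $\bar{A}(s)=-A^\star(s)$, $\bar{B}(s)=-B^\star(s)$, and $\bar{C}(s)=C^\star(s)$, I would rewrite the backward system (\ref{ukl_lin_1}) as the forward delta system
\begin{equation*}
\begin{split}
x^\Delta(t)&=A(t)x(t)+B(t)u(t)\, ,\\
z(t)&=C(t)x(t)\, ,
\end{split}
\end{equation*}
on $\T$, where $y(s)=x^\star(-s)$, $v(s)=u^\star(-s)$, and $\gamma(s)=z^\star(-s)$. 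Because these relations establish a bijection between the outputs of the two systems and between their admissible initial states, the output $\gamma$ on $(s_1,s_0]_{\T^\star}$ determines $y_0$ uniquely if and only if the dual output $z$ on $[t_0,t_1)_\T$ determines $x(t_0)=y_0$ uniquely. Thus observability of (\ref{ukl_lin_1}) on $[s_1,s_0]_{\T^\star}$ is equivalent to observability of the delta system on $[t_0,t_1]_\T$, with $t_1=-s_1$.

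Next I would invoke the forward observability rank condition on $\T$: the delta system is observable on $[t_0,t_1]_\T$ whenever, for some $t_c\in[t_0,t_1)_\T$, the matrix stacking the blocks $L_j(t):=-\frac{\partial^j}{\Delta^j}\left[C(t)\Phi_A(t,s)\right]_{|_{s=t}}$, $j=0,1,\ldots,r$, has rank $n$. This is the observability analogue of the controllability result used in Theorem~\ref{controllability_tv}, proved in \citep{DGJM} under regressivity; exactly as in the controllability case, the regressivity hypothesis is not needed for the forward implication employed here, so the differentiability assumptions on $\bar{C}$, $\hat{\rho}$, and $\bar{A}$ transfer to the dual assumptions on $C$, $\rho$, and $A$ and suffice.

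The remaining, and main, step is to identify the dual observability matrices. Using Lemma~\ref{delta_star} to differentiate through the second argument and recalling $\Psi_{\bar{A}}=(\Phi_A)^\star$ (as in Proposition~\ref{cs:lin_aut}) and $\bar{C}=C^\star$, the chain-rule factor of $-1$ produced at each differentiation combines with the overall sign to give $L_j^\star(-t)=\bar{L}_j(s)$, with $\bar{L}_j$ precisely as in (\ref{L}). The careful bookkeeping of these signs, parallel to the computation of $K_j^\star$ in Theorem~\ref{controllability_tv}, is where I expect the only real difficulty to lie. Since passing to the dual ($t\mapsto -t$ composed with $\star$) is rank-preserving, the full-rank condition on $L_0(t_c),\ldots,L_r(t_c)$ is equivalent to the full-rank condition on $\bar{L}_0(s_c),\ldots,\bar{L}_r(s_c)$ at $s_c=-t_c\in(s_1,s_0]_{\T^\star}$. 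Combining this with the observability equivalence of the first step yields the claim.
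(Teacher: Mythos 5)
Your proposal is correct and takes essentially the same approach as the paper: the paper gives no separate proof of this theorem, stating only that it follows ``using a similar reasoning as in Theorem~\ref{controllability_tv}'', and your argument is precisely that duality reasoning spelled out --- rewrite the backward system as the delta system on $\T$ via Lemma~\ref{delta_star}, invoke the forward observability rank condition of \citep{DGJM} (noting regressivity is not needed), and identify the matrices $\bar{L}_j$ of \eqref{L} as the duals of the forward observability matrices. Your observation that the signs produced by repeated $\hat{\nabla}$-differentiation do not affect the full-rank condition soundly closes the bookkeeping that the paper itself leaves implicit.
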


Let us consider the particular case of a time invariant system
(\ref{ukl_lin_ti_1}) on the time scale $\T^\star$:

\begin{theorem}
Assume that the interval $[s_1,s_0]_{\T^\star}$ consists
of at least $n+1$ points. The following conditions are equivalent:
\begin{enumerate}
  \item [(i)] system (\ref{ukl_lin_ti_1}) is observable;
  \item [(ii)] ${\rm rank}\left(
                    \begin{array}{c}
                      \bar{C}\bar{P}_0\\
                      \bar{C}\bar{P}_1\\
                      \vdots\\
                      \bar{C}\bar{P}_{n-1}
                    \end{array}
                  \right)=n$, where matrices $\bar{P}_i$, $i=0,1,\ldots,n-1$,
                  are given recursively by $\bar{P}_0=I$, $\bar{P}_{k+1}=(\bar{A}-\lambda_{k+1}I)\bar{P}_k$,
                  $k=0,\ldots,n-1$, and $\lambda_1,\ldots,\lambda_n$ are the eigenvalues of $\bar{A}$;
  \item [(iii)] ${\rm rank}\left(
                    \begin{array}{c}
                      \bar{C} \\
                      \bar{C}\bar{A} \\
                      \vdots \\
                      \bar{C}\bar{A}^{n-1} \\
                    \end{array}
                  \right)=n$.
\end{enumerate}
\end{theorem}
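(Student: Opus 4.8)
The plan is to mirror the dualization argument already used for the time-invariant controllability result (Theorem~\ref{thm:4.3}), transferring the whole problem to the dual time scale $\T$ where the corresponding forward (delta) Kalman observability criteria are known. First I would set $t:=-s$ and $t_0:=-s_0$, so that $t$ ranges over the dual time scale $\T$ with operators $\sigma,\rho,\mu,\nu,\Delta,\nabla$. By Lemma~\ref{delta_star} the backward (nabla) system \eqref{ukl_lin_ti_1} on $\T^\star$ is rewritten as the forward (delta) time-invariant system $x^\Delta(t)=Ax(t)+Bu(t)$, $z(t)=Cx(t)$, $x(t_0)=y_0$ on $\T$, with $A=-\bar A$, $B=-\bar B$, $C=\bar C$, exactly as in the proof of Theorem~\ref{thm:4.3}. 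The hypothesis that $[s_1,s_0]_{\T^\star}$ contains at least $n+1$ points becomes the same statement for $[t_0,t_1]_{\T}$ with $t_1=-s_1$, which is the cardinality condition required by the forward result.

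Next I would argue that observability itself, and not merely the associated rank conditions, is preserved by this duality. By the dual relations between state and output coming from Propositions~\ref{lin_aut} and \ref{cs:lin_aut}, the output $\gamma$ and state $y$ on $\T^\star$ are the dual functions of the output $z$ and state $x$ on $\T$. Hence reconstructing the final-time state $y_0=y(s_0)$ from $\gamma(s)$ on the backward interval $(s_1,s_0]_{\T^\star}$ is equivalent to reconstructing the initial state $x(t_0)=y_0$ from $z(t)$ on the forward interval $[t_0,t_1)_{\T}$. Therefore system \eqref{ukl_lin_ti_1} is observable on $[s_1,s_0]_{\T^\star}$ if and only if its dual delta system is observable on $[t_0,t_1]_{\T}$. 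For that forward system the equivalence of observability with its two Kalman rank conditions—the rank of the matrix stacking the $CP_i$ and the rank of the matrix stacking the $CA^i$—is established in \citep{BPaw}, by the same arguments that yield the controllability equivalences (a)--(d) of Theorem~\ref{thm:4.3}.

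Finally I would translate the forward conditions back to $\T^\star$. Since $A=-\bar A$ and $C=\bar C$, we have $CA^{k}=(-1)^{k}\,\bar C\bar A^{k}$; moreover the eigenvalues of $A$ are the negatives of those of $\bar A$, so a short induction gives $P_k=(-1)^{k}\bar P_k$ and hence $CP_k=(-1)^{k}\bar C\bar P_k$. These $(-1)^{k}$ factors only rescale whole block rows of the observability matrices and so leave their ranks unchanged, identifying the forward $CP_i$-rank condition with (ii) and the forward $CA^i$-rank condition with (iii), while observability of the dual system matches (i). Chaining the three equivalences yields (i) $\Leftrightarrow$ (ii) $\Leftrightarrow$ (iii) on $\T^\star$. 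The only genuinely substantive step is the second one, namely checking that the forward/backward duality turns recoverability of the final-time state from the backward output into recoverability of the initial state from the forward output; the remaining sign bookkeeping is routine and rank-preserving.
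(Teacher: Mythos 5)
Your proof is correct and follows essentially the same route as the paper's: dualize via $t:=-s$ and Lemma~\ref{delta_star} to the forward time-invariant system \eqref{sys_ti1_T} on $\T$, invoke the delta Kalman observability equivalences from \citep{BPaw}, and translate back to $\T^\star$. You are in fact more explicit than the paper on two points it leaves implicit, namely that the duality preserves observability itself (recovering $y(s_0)$ from the backward output equals recovering $x(t_0)$ from the forward output) and the sign bookkeeping $CA^k=(-1)^k\bar{C}\bar{A}^k$, $P_k=(-1)^k\bar{P}_k$, which rescales block rows and so leaves the ranks unchanged.
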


\begin{proof}
Without loss of generality we may assume that $\bar{B}(s)=0$.
Putting $t:=-s$ for any $s\in\T^\star$ and letting $t_0:=-s_0$,
we rewrite the system from the time scale $\T^\star$ onto $\T$: using
Lemma~\ref{delta_star} we can rewrite the given system (\ref{ukl_lin_ti_1})
in the form (\ref{sys_ti1_T}). Since conditions,
\begin{enumerate}
\item [(a)] system (\ref{sys_ti1_T}) is observable;
\item [(b)] ${\rm rank}\left(
                    \begin{array}{c}
                      CP_0\\
                      CP_1\\
                      \vdots\\
                      CP_{n-1}
                    \end{array}
                  \right)=n$, where matrices $P_i$, $i=0,\ldots,n-1$, are given recursively by
$P_0=I$ and $P_{k+1}=(A-\lambda_{k+1}I)P_k$,
$k=0,\ldots,n-1$, with $\lambda_1,\ldots,\lambda_n$ the eigenvalues of $A$;
\item [(c)] ${\rm rank}\left(
                    \begin{array}{c}
                      C \\
                      CA \\
                      \vdots \\
                      CA^{n-1} \\
                    \end{array}
                  \right)=n$;
\end{enumerate}
are equivalent on the time scale $\T$ (see \citep{BPaw}),
taking $s_0:=-t_0$ and $s:=-t$ for any $t\in\T$ we obtain
equivalence of conditions $(i)$--$(iii)$ on the time scale $\T^\star$.
\end{proof}


\section{Realizability}
\label{sec:real}

One can notice that the control $v(\cdot)$ and the output $\gamma(\cdot)$
of system $\bar{\Lambda}$ given by (\ref{ukl_lin_1})
are related in the following way:
\[\gamma(s)=C(s)\Psi_{\bar{A}}(s,s_0)y_0+\int_s^{s_0}C(s)\Psi_{\bar{A}}(s,-\hat{\rho}(z))
\bar{B}(z)v(z)\hat{\nabla}z\]
for $s\leq s_0$, $s_0\in\T^\star$ fixed. The operator
$\bar{\mathcal{A}}[v(\cdot)] := \int_s^{s_0}\bar{G}(s,z)v(z)\hat{\nabla}z$,
where
\[\bar{G}(s,z)=C(s)\Psi_{\bar{A}}(s,-\hat{\rho}(z))
\bar{B}(z),\]
is called the \emph{action} of $\bar{\Lambda}$, while function $\bar{G}$ is called
the \emph{weighting pattern} of the system. Note that different systems
of form (\ref{ukl_lin_1}) can define the same weighting pattern.
Each of them is called a \emph{realization}. A realization is \emph{minimal}
if no realization of $\bar{G}(s,z)$ with dimension less than $n$ exists
($n$ is the dimension of matrix $\bar{A}$).

\begin{theorem}
\label{real_T}
The weighting pattern $\bar{G}(s,z)$ is realizable if and only if for each
$s_1\in\T^\star$, $s_1\leq s_0$, there exists a ld-continuous matrix function
$\bar{H} : (-\infty,s_1]_{\T^\star}\rar\R^{q\times n}$
and a ld-continuous matrix function
$\bar{F} : (-\infty,s_1]_{\T^\star}\rar\R^{n \times r}$ such that
\[\bar{G}(s,z)=\bar{H}(s)\bar{F}(z).\]
\end{theorem}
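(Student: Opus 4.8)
The plan is to use the same dualization device that drives every other proof in the paper: carry the backward weighting pattern $\bar G$ over to the dual \emph{forward} time scale $\T=(\T^\star)^\star$, invoke the already-known delta-calculus factorization criterion for realizability there, and then pull the conclusion back to $\T^\star$. First I would set $t:=-s$ and $\tau:=-z$, so that $t,\tau$ range over $\T$, and, exactly as in the proof of Proposition~\ref{cs:lin_aut} and Theorem~\ref{thm:4.3}, rewrite the backward system $\bar\Lambda$ of \eqref{ukl_lin_1} as the forward system $\Lambda$ on $\T$ with $A=-\bar A^\star$, $B=-\bar B^\star$, and $C=\bar C^\star$. The associated (delta) weighting pattern of $\Lambda$ is the standard object $G(t,\tau)=C(t)\Phi_A(t,\sigma(\tau))B(\tau)$.

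Second, I would check that $\bar G$ is, up to a sign, the dual of $G$. The key algebraic identity is that the shifted argument in the definition of $\bar G$ matches the forward jump: from \eqref{hat_rho} one has $\hat\rho(z)=-\sigma(-z)$, hence $-\hat\rho(z)=\sigma(\tau)$ with $\tau=-z$. Combining this with the transition-function duality $\Psi_{\bar A}(s,s_0)=(\Phi_A)^\star(-s,-s_0)$ of Proposition~\ref{cs:lin_aut}, together with $\bar C(s)=C^\star(s)$ and $\bar B(z)=-B^\star(z)$, gives $\bar G(s,z)=-\,G^\star(s,z)$, where $G^\star$ denotes the dual of $G$ under $\xi(t)=-t$. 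Thus the realizability question for $\bar G$ on $\T^\star$ is identical, via the $\star$ operation, to the realizability question for $G$ on $\T$.

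Third, I would apply the forward factorization theorem for time-varying systems established in \citep{DGJM}: the weighting pattern $G(t,\tau)$ admits a finite-dimensional realization on $\T$ if and only if $G(t,\tau)=H(t)F(\tau)$ for rd-continuous matrix functions $H,F$ of the appropriate sizes. The equivalence between realizability on $\T^\star$ and on $\T$ is then supplied by Proposition~\ref{cs:lin_aut}, since duality is a weighting-pattern-preserving bijection between backward systems on $\T^\star$ and forward systems on $\T$ of the same dimension, so a realization of $\bar G$ dualizes to one of $G$ and conversely. Dualizing the factorization, I would put $\bar H(s):=-H^\star(s)$ and $\bar F(z):=F^\star(z)$ to obtain $\bar G(s,z)=\bar H(s)\bar F(z)$; ld-continuity of $\bar H,\bar F$ is immediate from the fact recalled in Section~\ref{sec:dual} that a function is rd-continuous on $\T$ if and only if its dual is ld-continuous on $\T^\star$, and the half-lines $(-\infty,s_1]_{\T^\star}$ correspond to $[t_1,\infty)_\T$ under $\xi$.

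The two dualizations are routine. The step requiring genuine care is the second one: I must verify that the shifted argument $-\hat\rho(z)$ in $\bar G$ matches $\sigma(\tau)$ \emph{exactly}, so that the dual of $\bar G$ is literally the delta weighting pattern $C(t)\Phi_A(t,\sigma(\tau))B(\tau)$ rather than an off-by-one-jump variant; any mismatch here would destroy the faithful correspondence between realizations on the two scales, and with it the equivalence in Theorem~\ref{real_T}. I also expect to double-check that the factorization splits in the correct variables, i.e.\ that the $s$-factor $\bar H$ and the $z$-factor $\bar F$ both emerge ld-continuous on the appropriate half-lines, which is precisely where the rd/ld-continuity duality of Section~\ref{sec:dual} is used.
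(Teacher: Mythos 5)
Your proposal is correct and follows essentially the same route as the paper's own proof: dualize $\bar{\Lambda}$ to the forward system $\Lambda$ on $\T$, identify $\bar{G}$ with the dual of the delta weighting pattern $C(t)\Phi_A(t,\sigma(\tau))B(\tau)$ via the identity $-\hat{\rho}(z)=\sigma(-z)$ from \eqref{hat_rho}, invoke the forward factorization criterion of \citep{DGJM}, and pull the factorization back using the rd/ld-continuity duality of Section~\ref{sec:dual}. The only difference is sign bookkeeping: you carry the sign explicitly as $\bar{G}=-G^\star$ and absorb it into $\bar{H}:=-H^\star$, whereas the paper absorbs it into $\bar{B}=-B^\star$ inside \eqref{weighting_pat} and takes $\bar{H}=H^\star$ -- the two conventions are equivalent, and since any factorization tolerates a sign, this does not affect the equivalence.
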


\begin{proof}
Taking $t := -s$ and $\tau=-z$ (so that $t,\tau\in\T$, where $\T$
is the dual time scale of $\T^\star$ with operators $\sigma$, $\rho$,
$\mu$, $\nu$, $\Delta$, and $\nabla$) and using Lemma~\ref{delta_star},
we can rewrite system $\bar{\Lambda}$ (\ref{ukl_lin_1})
into the system $\Lambda$ (\ref{sys_T}) on the time scale $\T$.
A forward characterization of realizable systems on $\T$
can be obtained from \citep{DGJM}. Note that if $G^\star$
is the dual function to $G$ and $H^\star$, $F^\star$ are dual
to $H$ and $F$, respectively (note that $H^\star$ and $F^\star$ are ld-continuous),
then $G^\star(-t,-\sigma(-\tau))=H^\star(-t)F^\star(-\sigma(-t\tau)$.
On the other hand, we have the weighting pattern
\begin{equation}
\label{weighting_pat}
G^\star(-t,-\sigma(-\tau))=C^\star(-t)\Phi_{-A^\star}\left(-t,
-\sigma(-\tau)\right)(-B^\star(\tau)).
\end{equation}
It means that for $\bar{G}=G^\star$, $\bar{H}=H^\star$, and $\bar{F}=F^\star$,
the weighting pattern $\bar{G}(s,-\hat{\rho}(z))$ is realizable
if and only if $\bar{G}(s,-\hat{\rho}(z))=\bar{H}(s)\bar{F}(-\hat{\rho}(z))$,
where $\bar{G}(s,-\hat{\rho}(z))=C(s)\Psi_{\bar{A}}(s,-\hat{\rho}(z))\bar{B}(z)$.
\end{proof}

\begin{remark}
For time-invariant systems the weighting pattern is given by
$\bar{G}(s,-\hat{\rho}(z))=C\Psi_{\bar{A}}(s,-\hat{\rho}(z))\bar{B}$.
The backward characterization of realizable time-invariant systems on
time scales follows from Theorem~\ref{real_T},
and the forward characterization of realizable time-invariant systems
on time scales is given in \citep{BPaw}.
\end{remark}

\begin{definition}
We say that control system $\bar{\Lambda}$ given by \eqref{ukl_lin_1}
is \emph{progressive} if matrix $I - \hat{\nu}(s) \bar{A}(s)$ is invertible.
\end{definition}

\begin{remark}
If $\bar{\Lambda}$ is progressive,
then its dual $\bar{\Lambda}^\star = \Lambda$ is regressive -- see
\citep{Bh} for the definition of regressivity.
\end{remark}

\begin{theorem}
Let us assume that the control system
$\bar{\Lambda}$ given by \eqref{ukl_lin_1} is progressive.
Let $\bar{\Lambda}$ be a realization of the weighting pattern $\bar{G}(s,-\hat{\rho}(z))$.
Then this realization is minimal if and only if for some $s_0$ and $s_1<s_0$
the system $\bar{\Lambda}$ is both controllable and observable on $[s_1,s_0]_{\T^\star}$.
\end{theorem}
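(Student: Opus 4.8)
The plan is to follow the same duality strategy used throughout the paper: transport the statement to the dual time scale $\T$, invoke the known forward characterization of minimal realizations there, and then carry the conclusion back to $\T^\star$. First I would set $t := -s$ and $\tau := -z$, so that $t,\tau$ range over $\T$, and use Lemma~\ref{delta_star} to rewrite $\bar{\Lambda}$ in the delta form $\Lambda$ given by (\ref{sys_T}), with $A = -\bar{A}^\star$, $B = -\bar{B}^\star$, and $C = \bar{C}^\star$, exactly as in the proofs of Theorems~\ref{controllability_tv} and~\ref{real_T}. By the remark preceding this theorem, the progressivity of $\bar{\Lambda}$ is precisely the regressivity of the dual system $\Lambda$ on $\T$, so the forward theory applies with no extra hypothesis.

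Second I would quote the forward minimal-realization result: for a regressive linear control system $\Lambda$ on $\T$, a realization of its weighting pattern is minimal if and only if $\Lambda$ is simultaneously controllable and observable on some interval $[t_0,t_1]_{\T}$. This is the time-scale version established in \citep{DGJM} (see also \citep{BPaw}). Regressivity is exactly what secures both directions: controllability together with observability forces minimality, and conversely any non-minimal realization admits a reduction, yielding a system that must fail one of the two rank conditions.

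The third step is to translate each of the three notions across the duality. The correspondence for controllability is Theorem~\ref{thm:4.3} (and, in the time-varying rank estimate, Theorem~\ref{controllability_tv}): $\Lambda$ is controllable on $[t_0,t_1]_{\T}$ if and only if $\bar{\Lambda}$ is controllable on $[s_1,s_0]_{\T^\star}$ with $s_i = -t_i$, where $s_1<s_0$ corresponds to $t_0<t_1$. The correspondence for observability is the observability theorem of Section~\ref{sec:obs}, and the correspondence for realizability is Theorem~\ref{real_T}, which identifies realizations of $G$ with realizations of $\bar{G}=G^\star$. Chaining these with the forward result gives the desired equivalence: $\bar{\Lambda}$ is a minimal realization iff $\Lambda$ is, iff $\Lambda$ is controllable and observable, iff $\bar{\Lambda}$ is controllable and observable on $[s_1,s_0]_{\T^\star}$.

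The step I expect to be the genuine obstacle is making the word \emph{minimal} commute with the duality. I would need to check that the dualizing map $(\bar{A},\bar{B},\bar{C})\mapsto(A,B,C)=(-\bar{A}^\star,-\bar{B}^\star,\bar{C}^\star)$ is a dimension-preserving bijection between realizations of $\bar{G}(s,-\hat{\rho}(z))$ and realizations of the dual weighting pattern $G$, so that a realization of $\bar{G}$ of dimension strictly less than $n$ exists if and only if such a reduced realization of $G$ exists. Since $f\mapsto f^\star$ is an involution that preserves matrix sizes and, by Theorem~\ref{real_T}, sends realizations to realizations, this bijection follows once the bookkeeping relating $\hat{\rho}$ to $\sigma$ through (\ref{hat_rho}) is verified. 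With minimality thus preserved under duality, the equivalence transports cleanly and the proof closes.
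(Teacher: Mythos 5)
Your proposal matches the paper's own proof essentially step for step: the same substitutions $t:=-s$, $\tau:=-z$, the same dualization $(\bar{A},\bar{B},\bar{C})\mapsto(-\bar{A}^\star,-\bar{B}^\star,\bar{C}^\star)$ via Lemma~\ref{delta_star}, the identification of progressivity of $\bar{\Lambda}$ with regressivity of the dual $\Lambda$, and the same appeal to the forward minimal-realization theorem of \citep{DGJM}, transported back through the weighting-pattern identity (\ref{weighting_pat}). Your final paragraph on minimality commuting with duality is in fact more explicit than the paper, which compresses that point into the single phrase ``one-to-one correspondence between the considered systems.''
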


\begin{proof}
Taking $t :=-s$, $\tau=-z$, and $x_0=y_0^\star$ (so that $t,\tau\in\T$ where $\T$
is the dual time scale of $\T^\star$ with operators $\sigma$, $\rho$, $\mu$, $\nu$, $\Delta$, $\nabla$)
and using Lemma~\ref{delta_star} we can convert system $\bar{\Lambda}$
onto system $\Lambda$ defined on $\T$:
 \begin{equation*}
  \begin{split}
   \Lambda:\;\;\; x^\Delta(t) &= A(t)x(t)+B(t)u(t)\, ,\\
     \vartheta(t) &= C(t)x(t),\\
     x(t_0)&=x_0
   \end{split}
  \end{equation*}
where $A(t)\in\R^{n\times n}$, $B(t)\in\R^{n\times m}$, and $C(t)\in\R^{p\times n}$,
$p,m\leq n$, are rd-continuous matricial functions of $t$. Under the regressivity assumption system $\Lambda$
is a realization of the weighting pattern $G(t,\sigma(\tau))$ if and only if for some
$t_0$ and $t_1<t_0$ the system is both controllable and observable on $[t_0,t_1]_{\T}$ \citep{DGJM}.
Now, if $s:=-t$, $z:=-\tau$ and $s_0:=-t_0$ for $t,\tau,t_0\in\T$, then the dual system to
$\Lambda$ is of the form of the system $\bar{\Lambda}$ with $y$ the dual vector to $x$,
$\bar{A}(s):=-A^\star(s)$, $\bar{B}(s):=-B^\star(s)$, and $\bar{C}(s):=C^\star(s)$.
Moreover, from (\ref{weighting_pat}) we can see that the weighting pattern
on the time scale $\T^\star$ is exactly of the form $\bar{G}(s,-\hat{\rho}(z))$.
Since this gives a one-to-one correspondence between the considered systems,
the realization given by $\bar{G}(s,-\hat{\rho}(z))$ is a minimal one
if and only if for some $s_0$ and $s_1<s_0$ the system $\bar{\Lambda}$
is both (backward) controllable and observable on $[s_1,s_0]_{\T^\star}$.
\end{proof}

For the time invariant case we can prove minimal realization
without assuming progressivity.

\begin{theorem}
Let system (\ref{ukl_lin_ti_1}) be a realization
of the weighting pattern $\bar{G}(s,-\hat{\rho}(z))$. This realization
is minimal if and only if for some $s_0$ and $s_1<s_0$ the system (\ref{ukl_lin_ti_1})
is both controllable and observable on $[s_1,s_0]_{\T^\star}$.
\end{theorem}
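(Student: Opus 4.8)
The plan is to follow the same Caputo-duality scheme used in the preceding theorem, but to invoke the time-invariant forward minimal-realization result of \citep{BPaw} in place of the time-varying result of \citep{DGJM}; this is precisely what lets us dispense with the progressivity hypothesis. First I would set $t := -s$, $\tau := -z$, and $x_0 := y_0^\star$, so that $t,\tau$ run over the dual time scale $\T$ (with its operators $\sigma$, $\rho$, $\mu$, $\nu$, $\Delta$, $\nabla$). Applying Lemma~\ref{delta_star} converts the backward time-invariant system \eqref{ukl_lin_ti_1} into a forward time-invariant system of the form \eqref{sys_ti1_T} on $\T$, with $A = -\bar{A}$, $B = -\bar{B}$, and $C = \bar{C}$, exactly as in the earlier proofs.

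Next I would identify the weighting pattern of this dual system. By the duality relation \eqref{weighting_pat}, the forward weighting pattern $G(t,\sigma(\tau))$ on $\T$ corresponds under $\star$ to the backward weighting pattern $\bar{G}(s,-\hat{\rho}(z)) = \bar{C}\,\Psi_{\bar{A}}(s,-\hat{\rho}(z))\,\bar{B}$ on $\T^\star$, so that $\Lambda$ is a realization of $G(t,\sigma(\tau))$ if and only if $\bar{\Lambda}$ is a realization of $\bar{G}(s,-\hat{\rho}(z))$. The key point is that this $\star$-correspondence is dimension-preserving and one-to-one, whence a realization on $\T^\star$ is minimal (of dimension $n$) exactly when its dual realization on $\T$ is minimal.

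Now I would apply \citep{BPaw}: for a time-invariant forward system on $\T$, and \emph{without} any regressivity assumption, the realization \eqref{sys_ti1_T} of $G(t,\sigma(\tau))$ is minimal if and only if it is both controllable and observable on some interval $[t_0,t_1]_\T$. Finally I would transport this equivalence back to $\T^\star$ via $s := -t$, $z := -\tau$, $s_0 := -t_0$: by the controllability and observability theorems proved above, controllability (resp.\ observability) of $\Lambda$ on $[t_0,t_1]_\T$ is equivalent to backward controllability (resp.\ observability) of $\bar{\Lambda}$ on $[s_1,s_0]_{\T^\star}$, and minimality is preserved by the one-to-one dual correspondence, giving the claimed equivalence.

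The main obstacle is conceptual rather than computational: one must verify that the availability of the \citep{BPaw} result genuinely removes the need for progressivity. In the time-varying case, progressivity of $\bar{\Lambda}$ was required only to guarantee regressivity of $\Lambda$, a standing hypothesis of \citep{DGJM}; since the time-invariant forward theorem of \citep{BPaw} holds with the regressivity assumption dropped, the entire argument goes through unchanged for constant $\bar{A}$, $\bar{B}$, $\bar{C}$ with no progressivity requirement. The remaining steps are then routine bookkeeping of the dual relations already recorded in the excerpt.
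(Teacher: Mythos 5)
Your proposal is correct and follows essentially the same route as the paper, whose proof is a one-line appeal to Caputo's duality together with Theorem~5.7 and Corollary~5.8 of \citep{BPaw}; you simply spell out the dual correspondences ($A=-\bar{A}$, $B=-\bar{B}$, $C=\bar{C}$, the weighting-pattern relation \eqref{weighting_pat}, and preservation of dimension and of controllability/observability) that the paper leaves implicit. Your observation that progressivity is dispensable precisely because the time-invariant forward result in \citep{BPaw} holds without regressivity matches the paper's stated rationale exactly.
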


\begin{proof}
The result follows by applying Caputo's duality \citep{Caputo}
to the control system (\ref{ukl_lin_ti_1}) and using
Theorem~5.7 and Corollary~5.8 in \citep{BPaw}.
\end{proof}


\begin{example}
\label{ex2}
Let us consider a control system
\begin{equation*}
\begin{cases}
 y_1^{\hat{\nabla}}=y_1-y_2+v_1\\
 y_2^{\hat{\nabla}}=2y_2+v_2\\
 \gamma_1=y_1\\
 \gamma_2=-y_2
\end{cases}
\end{equation*}
defined on the time scale $\T^\star$ dual of $\T=\bigcup_{k\in\Z} [2k,2k+1]$.
It is easy to see that this system is both controllable and observable. Because
\[\hat{e}_{\bar{A}}= \left(
                       \begin{array}{cc}
                         2^{k-l}e^{s-z} & 2^{k-l}e^{s-z}-3^{k-l}e^{2(s-z)} \\
                         0 & 3^{k-l}e^{2(s-z)} \\
                       \end{array}
                     \right)
\]
for $k\neq l$, and
\[\hat{e}_{\bar{A}}= \left(
                       \begin{array}{cc}
                         e^{s-z} & e^{s-z}-e^{2(s-z)} \\
                         0 & e^{2(s-z)} \\
                       \end{array}
                     \right)
\]
for $k=l$, then for any $s_1\in[2p-1,2p]_{\T^\star}$ and $k<p<l$
\[\bar{G}(s,z)=\left(
                              \begin{array}{cc}
                                2^{k-p-s_1}e^{s-s_1} & -2\cdot 3^{k-p-s_1}e^{2(s-s_1)} \\
                              \end{array}
                            \right)\left(
                                     \begin{array}{c}
                                       2^{-(k-p+s_1)}e^{-(z-s_1)} \\
                                       3^{s_1}e^{-2(z-s_1)} \\
                                     \end{array}
                                   \right)\, ;
\]
while for any $s_1\in[z,s]_{\T^\star}$ and $k=l$
\[\bar{G}(s,-\hat{\rho}(z))=\left(
                              \begin{array}{cc}
                                2e^{s-s_1} & -2e^{2(s-s_1)} \\
                              \end{array}
                            \right)\left(
                                     \begin{array}{c}
                                       e^{-(z-s_1)} \\
                                       e^{-2(z-s_1)} \\
                                     \end{array}
                                   \right)\, .
\]
\end{example}


\section{Conclusions}
\label{sec:conc}

The \emph{calculus on time scales} has been developed
about 20 years ago by Hilger in order to unify various parallel
results in the theory of discrete and continuous
dynamical systems \citep{MR1066641}.
It found considerable number of applications over the last decade,
particularly in the context of engineering applications
and systems theory and control
\citep{D1,BPW,Bill_2009,MR2394394,Seiffertt,DGJM,KBPW:09,Ewa:Delfim:JOTA}.
The time scales calculus allows two dual formulations:
the delta-calculus where the derivative is the forward difference operator
(yielding the explicit Euler scheme) and the nabla-calculus with the backward
difference operator as derivative (and the implicit Euler scheme)
when the time scale $\mathbb{T}$ is $\mathbb{Z}$.
The duality between the two approaches,
on an arbitrary time scale $\mathbb{T}$,
has been recently exploited \citep{Caputo}.
Here we introduce the study of backward linear
control systems defined on an arbitrary time scale.
We claim that such systems are important in applications
because they rely on information about past values of states and/or outputs.
Indeed, as pointed out in \citep{MR2436490},
the nabla time scales analysis has important
implications for numerical analysts, who often use backward differences
instead of forward differences in their computations.
In this work controllability, observability, and realizability
conditions for nabla time-varying linear control systems
are proved using duality arguments and corresponding delta results.
Illustrative examples are given. We trust that the approach
here promoted can open further directions of future research.


\section*{Acknowledgement}

The authors were partially supported
by the \emph{Center for Research on Optimization and Control} (CEOC)
and by the \emph{Center of Research and Development in Mathematics and Applications}
(CIDMA) from the \emph{Portuguese Foundation for Science and Technology} (FCT),
cofinanced by the European Community fund FEDER/POCI 2010.

\medskip

The authors would like to express their gratitude to two anonymous
referees for relevant and stimulating remarks.



\end{document}